\newtheorem{thm}{Theorem}[section]
\newtheorem{lem}[thm]{Lemma}
\newtheorem{prop}[thm]{Proposition}
\theoremstyle{definition}
\newtheorem{defn}[thm]{Definition}
\theoremstyle{remark}
\newtheorem{rem}[thm]{Remark}
\numberwithin{equation}{section}
\begin{document}
\title[Composition principles for generalized almost periodic...]{Composition principles for generalized almost periodic functions}

\author{Marko Kosti\' c}
\address{Faculty of Technical Sciences,
University of Novi Sad,
Trg D. Obradovi\' ca 6, 21125 Novi Sad, Serbia}
\email{marco.s@verat.net}

{\renewcommand{\thefootnote}{} \footnote{2010 {\it Mathematics
Subject Classification.} 47A16, 47B37, 47D06.
\\ \text{  }  \ \    {\it Key words and phrases.} Stepanov $p$-almost periodic type functions, Weyl $p$-almost periodic type functions, composition principles, abstract semilinear Cauchy inclusions, Banach spaces.
\\  \text{  }  \ \ The author is partially supported by grant 174024 of Ministry
of Science and Technological Development, Republic of Serbia.}}

\begin{abstract}
In this paper, we consider composition principles for generalized almost periodic functions. We prove several new composition principles for the classes of (asymptotically) Stepanov $p$-almost periodic functions and (asymptotically, equi-)Weyl $p$-almost periodic functions, where $1\leq p<\infty ,$ and explain how we can use some of them in the qualitative analysis of solutions for certain classes of abstract semilinear Cauchy inclusions in Banach spaces. 
\end{abstract}

\maketitle

\section[Introduction and preliminaries]{Introduction and preliminaries}\label{intro}

The theory of generalized almost periodic and generalized almost automorphic functions is still an active field of research. Composition principles for generalized almost periodic and generalized almost automorphic functions, sometimes also called superposition principles, play an important role in the qualitative theory of abstract semilinear integro-differential equations in Banach spaces (see \cite{diagana}-\cite{gaston}, \cite{nova-mono} and references cited therein for more details on the subject).
The main aim of this paper is to consider composition principles for generalized almost periodic functions, only. As mentioned in the abstract, we focus our attention to the  Stepanov $p$-almost periodic type functions and Weyl $p$-almost periodic type functions, where $1\leq p<\infty .$

The organization and main ideas of paper are briefly described as follows. In Subsection \ref{subs}, we recall the basic definitions and results from the theory of generalized almost periodic functions. Our first contribution is Theorem \ref{vcb-show}, in which we reconsider a recent result of F. Bedouhene, Y. Ibaouene, O. Mellah and P. Raynaud de Fitte \cite[Theorem 3]{mellah} for the class of Stepanov $p$-almost periodic functions (see also Theorem \ref{vcb-show-weyll}, where we clarify a related result for the class of generalized Weyl $p$-almost periodic functions in the sense of Kovanko's approach \cite{kovanko}). We continue by stating and proving Proposition \ref{biblonja}, in which we examine an analogue of Theorem \ref{vcb-show} for asymptotically Stepanov $p$-almost periodic functions. The main result of paper is Theorem \ref{vcb-prim}, in which we clarify a new composition principle for the class of (equi-)Weyl $p$-almost periodic functions. The proof of Theorem \ref{vcb-prim}, which is much simpler than that of \cite[Theorem 3]{mellah} and works also for both classes of Weyl $p$-almost periodic functions under our consideration, is based on the method proposed by W. Long and H.-S. Ding in the proof of \cite[Theorem 2.2]{comp-adv} for Stepanov class. 
The main purpose of Theorem \ref{rora} is to analyze a composition principle for asymptotically (equi-)Weyl $p$-almost periodic functions. Finally, Subsection \ref{semilinear} is deserved for applications of our abstract theoretical results to abstract semilinear Cauchy inclusions in Banach spaces.

We use the standard terminology throughout the paper. By $X$ and $Y$ we denote two non-trivial complex Banach spaces; by
$\|x\|,$ $\|y\|_{Y}$ and $\|T\|_{L(X,Y)}$ we denote the norm of an element $x\in X,$ an element $y\in Y$ and a continuous linear mapping $T\in L(X,Y),$ respectively. Unless specified otherwise, we assume that $1\leq p<\infty$ and $I={\mathbb R}$ or $I=[0,\infty)$ henceforth.
By $C_{b}(I : X)$ and $C_{0}(I : X)$ we denote the vector spaces consisting of all bounded continuous functions $f  : I \rightarrow X$ and all bounded continuous functions $f  : I \rightarrow X$ such that $\lim_{|t|\rightarrow +\infty}\|f(t)\|=0.$ We refer the reader to \cite{nova-mono} for the notions of Weyl Liouville fractional derivatives and Caputo fractional derivatives used in Subsection \ref{semilinear}.

\subsection{Generalized almost periodic type functions}\label{subs}

Let $f : I \rightarrow X$ be continuous. Given a real number $\epsilon>0$ in advance, we call $\tau>0$ an $\epsilon$-period for $f(\cdot)$ iff
$
\| f(t+\tau)-f(t) \| \leq \epsilon, $ $ t\in I.
$
By $\vartheta(f,\epsilon)$ we denote the set consisting of all $\epsilon$-periods for $f(\cdot).$ We say that $f(\cdot)$ is almost periodic iff for each $\epsilon>0$ the set $\vartheta(f,\epsilon)$ is relatively dense in $I,$ which means that
there exists $l>0$ such that any subinterval of $I$ of length $l$ meets $\vartheta(f,\epsilon)$. The space consisting of all almost periodic functions from the interval $I$ into $X$ will be denoted by $AP(I:X).$

The class of asymptotically almost periodic functions was introduced by M. Fr\' echet in 1941, for the case that $I=[0,\infty)$ (more details about this class of functions with values in Banach spaces can be found in \cite{cheban}-\cite{diagana}, \cite{gaston} and references cited therein). A function $f \in C_{b}([0,\infty) : X)$ is said to asymptotically almost periodic iff
for every $\epsilon >0$ we can find numbers $ l > 0$ and $M >0$ such that every subinterval of $[0,\infty)$ of
length $l$ contains, at least, one number $\tau$ such that $\|f(t+\tau)-f(t)\| \leq \epsilon$ provided $t \geq M.$
The space consisting of all asymptotically almost periodic functions from $[0,\infty)$ into $X$ is denoted by
$AAP([0,\infty) : X).$ It is well known that, for a function $f \in C_{b}([0,\infty):X),$ the following
statements are equivalent: 
\begin{itemize}
\item[(i)] $f\in AAP([0,\infty) :X).$
\item[(ii)] There exist uniquely determined functions $g \in AP([0,\infty) :X)$ and $\phi \in  C_{0}([0,\infty): X)$
such that $f = g+\phi.$
\end{itemize}
In this paper, we will not consider asymptotically almost periodic functions defined on the real line.

Let $l>0$ and $f,\ g\in L^{p}_{loc}(I :X).$ We define the Stepanov `metric' by
\begin{align*}
D_{S_{l}}^{p}\bigl[f(\cdot),g(\cdot)\bigr]:= \sup_{x\in I}\Biggl[ \frac{1}{l}\int_{x}^{x+l}\bigl \| f(t) -g(t)\bigr\|^{p}\, dt\Biggr]^{1/p}.
\end{align*}
The Stepanov and Weyl `norm' of $f(\cdot)$ are defined by\index{Stepanov norm} \index{Stepanov distance} \index{Weyl!distance} \index{Weyl!norm}
$$
\bigl\| f  \bigr\|_{S_{l}^{p}}:= D_{S_{l}}^{p}\bigl[f(\cdot),0\bigr]\ \mbox{  and  }\ \bigl\| f  \bigr\|_{W^{p}}:= D_{W}^{p}\bigl[f(\cdot),0\bigr]:=\lim_{l\rightarrow +\infty}D_{S_{l}}^{p}\bigl[f(\cdot),0\bigr],
$$
respectively. As it is well known, the last limit always exists in $[0,\infty].$

It is said that a function $f\in L^{p}_{loc}(I :X)$ is Stepanov $p$-bounded, $S^{p}$-bounded shortly, iff
$
\|f\|_{S^{p}}:=\sup_{t\in I}( \int^{t+1}_{t}\|f(s)\|^{p}\, ds)^{1/p}<\infty.
$
Let us recall that the function $f\in L^{p}_{loc}(I :X)$ is Stepanov $p$-bounded iff $f\in L^{p}_{loc}(I :X)$ is Weyl $p$-bounded, $W^{p}$-bounded shortly, i.e., if $
\|f\|_{W^{p}}<\infty.$ 
The space $L_{S}^{p}(I:X)$ consisted of all $S^{p}$-bounded functions becomes a Banach space equipped with the above norm.
We say that a function $f\in L_{S}^{p}(I:X)$ is (asymptotically) Stepanov $p$-almost periodic iff the function
$
\hat{f} : I \rightarrow L^{p}([0,1] :X),
$ defined by
$
\hat{f}(t)(s):=f(t+s),$ $ t\in I,\ s\in [0,1]
$
is (asymptotically) almost periodic.
By $APS^{p} (I: X)$ and $AAPS^{p} (I: X)$ we denote the spaces consisted of all $S^{p}$-almost periodic functions $I\mapsto X$ and asymptotically $S^{p}$-almost periodic functions $I\mapsto X,$ respectively. In the case that $p=\infty,$ we set $L_{S}^{\infty}(I : X)\equiv L^{\infty}(I: X) ;$ $L_{S}^{r}(I)\equiv L^{r}(I: {\mathbb C} )$ for $r\in [1,\infty].$ 

The notion of an (equi-)Weyl $p$-almost periodic function is given as follows (see \cite{deda}, \cite{nova-mono} and references cited therein for more details on the subject):

\begin{defn}\label{Weyl defn}
Let $f\in L_{loc}^{p}(I: X).$ \index{function!equi-Weyl $p$-almost periodic} \index{function!Weyl $p$-almost periodic}
\begin{itemize}
\item[(i)] We say that the function $f(\cdot)$ is equi-Weyl $p$-almost periodic, $f\in e-W_{ap}^{p}(I:X)$ for short, iff for each $\epsilon>0$ we can find two real numbers $l>0$ and $L>0$ such that any interval $I'\subseteq I$ of length $L$ contains a point $\tau \in  I'$ such that
\begin{align*}
\sup_{x\in I}\Biggl[ \frac{1}{l}\int_{x}^{x+l}\bigl \| f(t+\tau) -f(t)\bigr\|^{p}\, dt\Biggr]^{1/p} \leq \epsilon, \mbox{ i.e., } D_{S_{l}}^{p}\bigl[f(\cdot+\tau),f(\cdot)\bigr] \leq \epsilon.
\end{align*}
\item[(ii)] We say that the function $f(\cdot)$ is Weyl $p$-almost periodic, $f\in W_{ap}^{p}(I: X)$ for short, iff for each $\epsilon>0$ we can find a real number $L>0$ such that any interval $I'\subseteq I$ of length $L$ contains a point $\tau \in  I'$ such that
\begin{align*}
 \sup_{x\in I}\Biggl[ \frac{1}{l}\int_{x}^{x+l}\bigl \| f(t+\tau) -f(t)\bigr\|^{p}\, dt\Biggr]^{1/p} \leq \epsilon, \mbox{ i.e., } \lim_{l\rightarrow \infty}D_{S_{l}}^{p}\bigl[f(\cdot+\tau),f(\cdot)\bigr] \leq \epsilon.
\end{align*}
\end{itemize}
\end{defn}

Asymptotically (equi-)Weyl $p$-almost periodic functions have been recently introduced by the author in \cite{weyl}.
If $q\in L_{loc}^{p}([0,\infty ) : X),$ then we define the function ${\bf q}(\cdot,\cdot): [0,\infty) \times [0,\infty) \rightarrow X$ by
$
{\bf q}(t,s):=q(t+s),$ $t,\, s\geq 0.
$ 

\begin{defn}\label{stea-weyl}
It is said that $q\in L_{loc}^{p}([0,\infty ) : X)$ is Weyl $p$-vanishing iff
\begin{align}\label{radnasebi}
\lim_{t\rightarrow \infty}\bigl\|{\bf q}(t,\cdot)\bigr\|_{W^{p}}=0,\mbox{ i.e., }\lim_{t\rightarrow \infty}\, \lim_{l\rightarrow \infty}\ \ \sup_{x\geq 0}\Biggl[ \frac{1}{l}\int_{x}^{x+l}\bigl \| q(t+s)\bigr\|^{p}\, ds\Biggr]^{1/p}=0.
\end{align}
\end{defn}

Replacing the limits in \eqref{radnasebi}, we 
come to the class of equi-Weyl $p$-vanishing functions. 
It is said that a function $q\in L_{loc}^{p}([0,\infty ) : X)$ is equi-Weyl $p$-vanishing iff
\begin{align*}
\lim_{l\rightarrow \infty}\, \lim_{t\rightarrow \infty}\ \ \sup_{x\geq 0}\Biggl[ \frac{1}{l}\int_{x}^{x+l}\bigl \| q(t+s)\bigr\|^{p}\, ds\Biggr]^{1/p}=0.
\end{align*}

We will use the following elementary lemma (see e.g. \cite[p. 70]{besik} for the scalar-valued case):

\begin{lem}\label{bes}
Let $-\infty<a<b<\infty,$ let $1\leq p'<p''<\infty,$ and let $f\in L^{p''}([a,b]: X).$ Then $f\in L^{p'}([a,b]: X)$ and
$$
\Biggl[\frac{1}{b-a}\int^{b}_{a}\|f(s)\|^{p'}\, ds \Biggr]^{1/p'}\leq \Biggl[\frac{1}{b-a}\int^{b}_{a}\|f(s)\|^{p''}\, ds \Biggr]^{1/p''}.
$$
\end{lem}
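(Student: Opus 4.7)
The statement is the standard monotonicity of $L^p$-averages on a finite measure space, so the plan is to reduce it to a one-line application of either Hölder's inequality or Jensen's inequality applied to the convex function $\varphi(t)=t^{p''/p'}$.

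First I would note that since $[a,b]$ has finite Lebesgue measure $b-a$ and $p'<p''$, the inclusion $L^{p''}([a,b]:X)\subseteq L^{p'}([a,b]:X)$ will fall out of the same inequality, so I only need to prove the displayed estimate. Setting $g(s):=\|f(s)\|^{p'}$, the function $g$ lies in $L^{p''/p'}([a,b])$ because $f\in L^{p''}([a,b]:X)$.

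The main step is Hölder's inequality with conjugate exponents $r:=p''/p'>1$ and $r':=p''/(p''-p')$ applied to the product $g(s)\cdot 1$:
\begin{align*}
\int_{a}^{b}\|f(s)\|^{p'}\,ds \leq \Biggl(\int_{a}^{b}\|f(s)\|^{p''}\,ds\Biggr)^{p'/p''}(b-a)^{(p''-p')/p''}.
\end{align*}
Dividing both sides by $b-a$ and using $(p''-p')/p''-1=-p'/p''$, I obtain
\begin{align*}
\frac{1}{b-a}\int_{a}^{b}\|f(s)\|^{p'}\,ds \leq \Biggl(\frac{1}{b-a}\int_{a}^{b}\|f(s)\|^{p''}\,ds\Biggr)^{p'/p''}.
\end{align*}
Raising to the power $1/p'$ yields exactly the claimed inequality.

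There is essentially no obstacle here; the only things to watch are the bookkeeping of the exponents (that $r'=p''/(p''-p')$ is indeed the Hölder conjugate of $p''/p'$, and that the factor $(b-a)^{(p''-p')/p''}$ combines with $(b-a)^{-1}$ to give $(b-a)^{-p'/p''}$, which then tucks back inside the bracket on the right-hand side). Alternatively one can cite Jensen applied to the convex map $t\mapsto t^{p''/p'}$ and the probability measure $ds/(b-a)$ on $[a,b]$; both routes are immediate.
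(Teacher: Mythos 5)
Your proof is correct. The paper gives no proof of this lemma at all --- it simply cites the scalar-valued case in Besicovitch's book --- so there is nothing to compare against; your H\"older computation with conjugate exponents $p''/p'$ and $p''/(p''-p')$ is the standard argument, the exponent bookkeeping checks out, and the membership $f\in L^{p'}([a,b]:X)$ does indeed follow from the finiteness of the right-hand side.
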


In the remaining part of this section, we will analyze generalized (asymptotically) almost periodic type functions depending on two parameters.
Denote by $C_{0}([0,\infty) \times Y : X)$ the space consisting of all
continuous functions $h : [0,\infty)  \times Y \rightarrow X$ such that $\lim_{t\rightarrow \infty}h(t, y) = 0$ uniformly for $y$ in any compact subset of $Y .$ 
 If $f : I  \times Y  \rightarrow X,$ then we define $\hat{f} : I  \times Y  \rightarrow L^{p}([0,1]:X)$ by $\hat{f}(t , y):=f(t +\cdot, y),$ $t\geq 0,$ $y\in Y.$

We recall the following definition (cf. \cite{nova-mono} and references cited therein):

\begin{defn}\label{definicija}
Let $1\leq p <\infty.$ 
\begin{itemize}
\item[(i)]
A function $f : I \times Y \rightarrow X$ is said to be almost periodic iff $f (\cdot, \cdot)$ is bounded, continuous as well as for every $\epsilon>0$ and every compact
$K\subseteq Y$ there exists $l(\epsilon,K) > 0$ such that every subinterval $J\subseteq I$ of length $l(\epsilon,K)$ contains a number $\tau$ with the property that $\|f (t +\tau , y)- f (t, y)\| \leq \epsilon$ for all $t \in  I,$ $ y \in K.$ The collection of such functions will be denoted by $AP(I \times Y : X).$
\item[(ii)] A function $f : [0,\infty)  \times Y \rightarrow X$ is said to be asymptotically almost periodic iff it is bounded, continuous and admits a
decomposition $f = g + q,$ where $g \in AP([0,\infty)  \times Y : X)$ and $q\in C_{0}([0,\infty)  \times Y : X).$ Denote by
 $AAP([0,\infty)  \times Y : X) $ the vector space consisting of all such functions.
\item[(iii)] A function $f : I \times Y \rightarrow X$ is said to be Stepanov $p$-almost periodic, $S^{p}$-almost periodic shortly, iff $\hat{f} : I  \times Y  \rightarrow L^{p}([0,1]:X)$ is almost periodic. Denote by
 $APS^{p}(I  \times Y : X) $ the vector space consisting of all such functions.
\item[(iv)] A function $f : [0,\infty)  \times Y \rightarrow X$
is said to be asymptotically $S^p$-almost periodic
iff $\hat{f}: [0,\infty)  \times Y \rightarrow  L^{p}([0,1]:X)$ is asymptotically almost periodic. The collection of such functions will be denoted by $AAPS^{p}([0,\infty) \times Y : X).$
\end{itemize}
\end{defn}

The following definition is slightly different from the corresponding definitions introduced recently in \cite{mellah} and \cite{irkutsk-prim} for the class of equi-Weyl $p$-almost periodic functions, with only one pivot space $X=Y:$

\begin{defn}\label{definicija}
\begin{itemize}
\item[(i)] 
A function $f : I \times Y \rightarrow X$ is said to be equi-Weyl $p$-almost periodic in $t\in I$ uniformly with respect to compact subsets of $Y$ iff
$f(\cdot,u)\in L_{loc}^{p}(I : X)$ for each fixed element $u\in Y$ and
if for each $\epsilon>0$ and each compact $K$ of $Y$ there exist two numbers $l>0$ and $L>0$ such that any interval $I'\subseteq I$ of length $L$ contains a point $\tau \in  I'$ such that
$$
\sup_{u\in K}\sup_{x \in I} \Biggl[\frac{1}{l}\int^{x+l}_{x}\bigl\|f(t+\tau,u)-f(t,u)\bigr\|^{p}\, dt\Biggr]^{1/p} <\epsilon . 
$$
We denote by $e-W_{ap,{\bf K}}^{p}(I \times Y:X)$ the vector space consisting of all such functions.
\item[(ii)] A function $f : I \times Y \rightarrow X$ is said to be Weyl $p$-almost periodic in $t\in I$ uniformly with respect to compact subsets of $Y$ iff
$f(\cdot,u)\in L_{loc}^{p}(I : X)$ for each fixed element $u\in Y$ and
if for each $\epsilon>0$ and each compact $K$ of $Y$
we can find a real number $L>0$ such that any interval $I'\subseteq I$ of length $L$ contains a point $\tau \in  I'$ satisfying that there exists a finite number $l(\epsilon,\tau)>0$ such that
\begin{align*}
\sup_{u\in K}\sup_{x\in I}\Biggl[ \frac{1}{l}\int_{x}^{x+l}\bigl \| f(t+\tau,u) -f(t,u)\bigr\|^{p}\, dt\Biggr]^{1/p} < \epsilon,\quad l\geq l(\epsilon,\tau).
\end{align*}
We denote by $W_{ap,{\bf K}}^{p}(I \times Y:X)$ the vector space consisting of all such functions.
\end{itemize}
\end{defn}

The following definition is known in the case that $X=Y$ (cf. \cite{irkutsk-prim}):

\begin{defn}\label{stea}
Let $q : [0,\infty ) \times Y \rightarrow X$ be such that $q(\cdot,u)\in L_{loc}^{p}([0,\infty) : X)$ for each fixed element $u\in Y.$
\begin{itemize}
\item[(i)] It is said that $q(\cdot,\cdot)$
is Weyl $p$-vanishing uniformly with respect to compact subsets of $Y$ iff 
for each compact set $K$ of $Y$ we have:
\begin{align*}
\lim_{t\rightarrow \infty}\, \lim_{l\rightarrow \infty}\ \ \sup_{\xi\geq 0 , u\in K}\Biggl[ \frac{1}{l}\int_{\xi}^{\xi+l}\bigl \| q(t+s,u)\bigr\|^{p}\, ds\Biggr]
^{1/p}=0.
\end{align*}
\item[(ii)] It is said  that $q(\cdot,\cdot)$
is equi-Weyl $p$-vanishing uniformly with respect to compact subsets of $Y$ iff 
for each compact set $K$ of $Y$ we have:
\begin{align*}
\lim_{l\rightarrow \infty}\, \lim_{t\rightarrow \infty}\ \ \sup_{\xi\geq 0 , u\in K}\Biggl[ \frac{1}{l}\int_{\xi}^{\xi+l}\bigl \| q(t+s,u)\bigr\|^{p}\, ds\Biggr]
^{1/p}=0.
\end{align*}
\end{itemize}
We denote by $W^{p}_{0,{\bf K}}(I \times Y:X)$ and $e-W^{p}_{0,{\bf K}}(I \times Y:X)$ the classes consisting of all Weyl $p$-vanishing functions, uniformly with respect to compact subsets of $Y$ and all equi-Weyl $p$-vanishing functions, uniformly with respect to compact subsets of $Y$, respectively.
\end{defn}

\section[Formulation and proof of main results]{Formulation and proof of main results}\label{goti}

We start by stating Theorem \ref{vcb-show}, which has been recently considered by F. Bedouhene, Y. Ibaouene, O. Mellah and P. Raynaud de Fitte \cite[Theorem 3]{mellah} for the class of equi-Weyl $p$-almost periodic functions. To the best knowledge of the author, this result has not been considered elsewhere for the class of (asymptotically) Stepanov $p$-almost periodic functions. It should be noted that a well-known result of W. Long and H.-S. Ding \cite[Theorem 2.2]{comp-adv} is not comparable with Theorem \ref{vcb-show}. Simply said, with the notation used below, the assumption $p=q$ always imposes the value of exponent $r=\infty$ in \cite[Theorem 2.2]{comp-adv}, which is not a general case in examination; on the other hand, the assumptions of \cite[Theorem 2.2]{comp-adv} requires the value $p>1$ of the exponent, which is not generally needed for applying Theorem \ref{vcb-show}. Only we can say is that \cite[Theorem 3]{mellah} in the Stepanov setting, i.e., Theorem \ref{vcb-show}, is much more general than \cite[Theorem 2.7.2]{nova-mono}, where the usual Lipschitz condition for the value $p=1$ of the exponent has been analyzed. The proof of theorem is very similar to that of \cite[Theorem 2.2]{comp-adv} and we will include only the most relevant details.

\begin{thm}\label{vcb-show}
Suppose that $p,\ q\in [1,\infty) ,$ $r\in [1,\infty],$ $1/p=1/q+1/r$ and the following conditions hold:
\begin{itemize}
\item[(i)] $f \in APS^{p}(I \times Y : X)  $ and there exists a function $ L_{f}\in L_{S}^{r}(I) $ such that 
\begin{align}\label{vbnmp}
\| f(t,x)-f(t,y)\| \leq L_{f}(t)\|x-y\|_{Y},\quad t\in I,\ x,\ y\in Y.
\end{align}
\item[(ii)] $x \in  APS^{q} (I: Y),$ and there exists a set ${\mathrm E} \subseteq I$ with $m ({\mathrm E})= 0$ such that
$ K :=\{x(t) : t \in I \setminus {\mathrm E}\}$
is relatively compact in $X;$ here, $m(\cdot)$ denotes the Lebesgue measure.
\end{itemize}
Then $f(\cdot, x(\cdot)) \in  APS^{p}(I : X).$
\end{thm}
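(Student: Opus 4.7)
The plan is to adapt the Long--Ding technique from \cite[Theorem~2.2]{comp-adv} to the present setting of a Stepanov-integrable Lipschitz constant. Set $F(t):=f(t,x(t))$ for $t\in I$. Measurability of $F$ is immediate, and relative boundedness of $K$ combined with the estimate $\|F(t)\|\leq L_{f}(t)\sup_{y\in K}\|y-y_{0}\|_{Y}+\|f(t,y_{0})\|$ (for any fixed $y_{0}\in K$) gives $F\in L_{S}^{p}(I:X)$, since Lemma~\ref{bes} together with the inequality $r\geq p$ (forced by $1/p=1/q+1/r$) yields $\|L_{f}\|_{S^{p}}\leq \|L_{f}\|_{S^{r}}<\infty$. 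It therefore remains to produce, for each $\epsilon>0$, a relatively dense set of Stepanov-$p$ $\epsilon$-periods for $F$.

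Fix $\epsilon>0$ and a parameter $\delta>0$ to be tuned at the end. Relative compactness of $K$ furnishes $y_{1},\dots,y_{N}\in K$ whose $\delta$-balls cover $K$; the standard peeling construction produces a disjoint measurable partition $\{B_{k}\}_{k=1}^{N}$ of $I\setminus\mathrm{E}$ with $B_{k}\subseteq x^{-1}(B(y_{k},\delta))$, so that $x_{\delta}(t):=y_{k}$ on $B_{k}$ (and $x_{\delta}:=y_{1}$ on $\mathrm{E}$) satisfies $\|x(t)-x_{\delta}(t)\|_{Y}<\delta$ for a.e.\ $t\in I$. Since the finite set $\widetilde{K}:=\{y_{1},\dots,y_{N}\}$ is compact, Bochner's standard intersection argument applied to the pair $(\hat{f}|_{\widetilde{K}},\hat{x})$ delivers, for any prescribed $\eta>0$, a relatively dense set of common $\eta$-periods $\tau$ satisfying both $\sup_{t}\bigl(\int_{t}^{t+1}\|x(s+\tau)-x(s)\|_{Y}^{q}\,ds\bigr)^{1/q}\leq \eta$ and $\max_{1\leq k\leq N}\sup_{t}\bigl(\int_{t}^{t+1}\|f(s+\tau,y_{k})-f(s,y_{k})\|^{p}\,ds\bigr)^{1/p}\leq \eta$.

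For such a $\tau$ split the increment of $F$ into four pieces:
\begin{align*}
f(t+\tau,x(t+\tau))-f(t,x(t))
&=\bigl[f(t+\tau,x(t+\tau))-f(t+\tau,x(t))\bigr]\\
&\quad+\bigl[f(t+\tau,x(t))-f(t+\tau,x_{\delta}(t))\bigr]\\
&\quad+\bigl[f(t+\tau,x_{\delta}(t))-f(t,x_{\delta}(t))\bigr]\\
&\quad+\bigl[f(t,x_{\delta}(t))-f(t,x(t))\bigr].
\end{align*}
Summands two and four are dominated by $\delta L_{f}(\cdot+\tau)$ and $\delta L_{f}(\cdot)$ respectively via \eqref{vbnmp}, each contributing at most $\delta\|L_{f}\|_{S^{r}}$ to the $L^{p}([t,t+1])$-norm (by Lemma~\ref{bes}); summand one is pointwise bounded by $L_{f}(\cdot+\tau)\|x(\cdot+\tau)-x(\cdot)\|_{Y}$, and H\"older's inequality with conjugate exponents $r/p$ and $q/p$ (admissible since $p/q+p/r=1$) gives its $L^{p}([t,t+1])$-norm at most $\|L_{f}\|_{S^{r}}\eta$; and disjointness of $\{B_{k}\}$ yields for summand three that
\begin{align*}
\int_{t}^{t+1}\|f(s+\tau,x_{\delta}(s))-f(s,x_{\delta}(s))\|^{p}\,ds\leq \sum_{k=1}^{N}\int_{t}^{t+1}\|f(s+\tau,y_{k})-f(s,y_{k})\|^{p}\,ds\leq N\eta^{p}.
\end{align*}
Choosing first $\delta$ so that $2\delta\|L_{f}\|_{S^{r}}<\epsilon/2$, and then $\eta$ (with $N=N(\delta)$ now fixed) so small that $(1+N^{1/p})\eta+\|L_{f}\|_{S^{r}}\eta<\epsilon/2$, produces Stepanov-$p$ $\epsilon$-periods for $F$ in the required relatively dense set.

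The principal delicacy is the forced order of parameter choices $\epsilon\to\delta\to N\to\eta$: the count $N$ depends on $\delta$ through the covering number of $K$, and the factor $N^{1/p}$ appearing in summand three forces $\eta$ to be fixed only after $N$ is. Continuity of $\hat{F}:I\to L^{p}([0,1]:X)$ (needed to speak of Bohr almost periodicity of $\hat F$) can be established by running the same four-term decomposition with small translations $h$ in place of $\tau$, exploiting continuity of $\hat{x}$ into $L^{q}([0,1]:Y)$ and of each $\hat{f}(\cdot,y_{k})$ into $L^{p}([0,1]:X)$; beyond this bookkeeping the argument is mechanical.
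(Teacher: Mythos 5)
Your argument is correct and is essentially the paper's own proof (which defers to the Long--Ding method of \cite[Theorem 2.2]{comp-adv}) written out in full: your four-term decomposition is exactly the paper's two-term split $[f(t+\tau,x(t+\tau))-f(t+\tau,x(t))]+[f(t+\tau,x(t))-f(t,x(t))]$ with the second bracket expanded through the finite $\delta$-net of $K$, and the key tools --- H\"older with $1/p=1/q+1/r$, Lemma \ref{bes} via $r\geq p$, and common $\eta$-periods for $\hat{f}$ and $\hat{x}$ --- coincide. No discrepancies to report.
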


\begin{proof}
The measurability and $S^{p}$-boundedness of function $f(\cdot, x(\cdot))$ can be shown as in the proof of \cite[Theorem 2.2]{comp-adv}, by interchanging the role of exponents $p$ and $q$ in the computation \cite[p. 6, l.-1/l. -4]{comp-adv}. Using the same change of roles for the exponents $p$ and $q$ in the computation ending the proof of above-mentioned theorem, we get that
\begin{align*}
\Biggl( & \int^{1}_{0}\bigl\|f(t+s+\tau,x(t+s+\tau))-f(t+s,x(t+s))\bigr \|^{p} \, ds \Biggr)^{1/p}
\\ \leq & \bigl\|L_{f}\bigr\|_{L_{S}^{r}(I)}\bigl\|x(t+\tau+\cdot)-x(t+\cdot) \bigr\|_{L^{q}([0,1] : Y)}
\\ + & \Biggl( \int^{1}_{0}\Bigl(\sup_{u\in K}\|f(t+s+\tau,u)-f(t+s,u)\|\Bigr)^{p}\, ds \Biggr)^{1/p}
\end{align*}
for all $t\in I,$ with the meaning clear. Since $r\leq p,$ the argumentation given in the proof of \cite[Lemma 2.1]{comp-adv} shows that there is an absolute constant $M>0$ such that
$$
\Biggl( \int^{1}_{0}\bigl\|f(t+s+\tau,x(t+s+\tau))-f(t+s,x(t+s))\bigr\|^{p} \, ds \Biggr)^{1/p}\leq M(1+\bigl\|L_{f}\bigr\|_{L_{S}^{r}(I)})\epsilon ,
$$ 
finishing the proof.
\end{proof}

Now we are in a position to clarify the following composition principle for asymptotically Stepanov $p$-almost periodic functions (cf. also \cite[Proposition 2.7.3]{nova-mono} for a similar result in this direction):

\begin{prop}\label{biblonja}
Let $I =[0,\infty).$
Suppose that $p,\ q\in [1,\infty) ,$ $r\in [1,\infty],$ $1/p=1/q+1/r$ and the following conditions hold: 
\begin{itemize}
\item[(i)] $g \in APS^{p}(I \times Y : X)  ,$ $ L_{g}\in L_{S}^{r}(I) $ and \eqref{vbnmp} holds with the functions $f(\cdot,\cdot)$ and $L_{f}(\cdot)$ replaced with the functions $g(\cdot,\cdot)$ and $L_{g}(\cdot)$
therein.
\item[(ii)] $y \in APS^{q}(I:Y)$ and there exists a set $E \subseteq I$ with $m (E)= 0$ such that
$ K =\{y(t) : t \in I \setminus E\}$
is relatively compact in $Y.$
\item[(iii)] $f(t,u)=g(t,u)+q(t,u)$ for all $t\geq 0$ and $u\in Y,$ where $\hat{q}\in C_{0}([0,\infty) \times Y : L^{p}([0,1]:X)).$
\item[(iv)] $x(t)=y(t)+z(t) $ for all $t\geq 0,$ where $\hat{z}\in C_{0}([0,\infty) : L^{q}([0,1]:Y)).$
\item[(v)]  There exists a set $E' \subseteq I$ with $m (E')= 0$ such that
$ K' =\{x(t) : t \in I \setminus E'\}$
is relatively compact in $ Y.$
\end{itemize}
Then $f(\cdot, x(\cdot)) \in AAPS^{p}(I : X).$
\end{prop}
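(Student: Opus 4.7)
The natural plan is to mimic the classical three-piece decomposition strategy used for asymptotically almost periodic functions. Using (iii) and (iv), I would write
\[
f(t,x(t)) = g(t,y(t)) + \bigl[g(t,x(t)) - g(t,y(t))\bigr] + q(t,x(t)),\qquad t\geq 0,
\]
and try to verify that $g(\cdot,y(\cdot))\in APS^{p}(I:X)$ while the hat-transforms of the two remaining summands belong to $C_{0}([0,\infty):L^{p}([0,1]:X))$. Together this furnishes precisely the decomposition demanded by the asymptotic version of Definition \ref{definicija}(iv), yielding $f(\cdot,x(\cdot))\in AAPS^{p}(I:X)$.

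The first piece is immediate from Theorem \ref{vcb-show}: conditions (i) and (ii) of the present proposition match the hypotheses of that theorem with $g$ in place of $f$ and $y$ in place of $x$, so $g(\cdot,y(\cdot))\in APS^{p}(I:X)$. For the Lipschitz error, the pointwise bound from (i) combined with $x-y=z$ gives $\|g(t+s,x(t+s))-g(t+s,y(t+s))\|\leq L_{g}(t+s)\|z(t+s)\|_{Y}$; integrating over $s\in[0,1]$ and applying H\"older's inequality with conjugate exponents $r/p$ and $q/p$ (valid since $1/p=1/q+1/r$) yields
\[
\int_{0}^{1}\bigl\|g(t+s,x(t+s))-g(t+s,y(t+s))\bigr\|^{p}\,ds \leq \|L_{g}\|_{S^{r}}^{p}\,\bigl\|\hat{z}(t)\bigr\|_{L^{q}([0,1]:Y)}^{p}.
\]
The right-hand side tends to $0$ as $t\to\infty$ because $L_{g}$ is $S^{r}$-bounded and $\hat{z}\in C_{0}([0,\infty):L^{q}([0,1]:Y))$ by (iv), so this bracket is Stepanov-$C_{0}$.

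The step I expect to be the main obstacle is the last piece, $q(\cdot,x(\cdot))$, where one must prove $\int_{0}^{1}\|q(t+s,x(t+s))\|^{p}\,ds\to 0$ as $t\to\infty$. By (v), $x(t+s)$ lies in the compact set $\overline{K'}\subset Y$ for a.e.\ $s$, and by (iii), $\|\hat{q}(t,u)\|_{L^{p}[0,1]}\to 0$ uniformly for $u\in\overline{K'}$ together with joint continuity of $\hat{q}$ on $[0,\infty)\times Y$. My strategy here is: given $\eps>0$, exploit the uniform $C_{0}$ property beyond some threshold $T_{0}$ together with uniform continuity of $\hat{q}$ on $[0,T_{0}]\times\overline{K'}$ to produce a single $\delta>0$ valid for all $t\geq 0$; cover $\overline{K'}$ by a finite $\delta$-net $u_{1},\dots,u_{N}$ and a corresponding Borel partition $\overline{K'}=\bigsqcup_{i}E_{i}$ with $E_{i}\subset B(u_{i},\delta)$; split $\int_{0}^{1}\|q(t+s,x(t+s))\|^{p}\,ds$ according to which $E_{i}$ contains $x(t+s)$; and bound each summand in terms of $\|\hat{q}(t,u_{i})\|_{L^{p}}^{p}$ plus a modulus-of-continuity correction. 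The genuine delicacy is that $\hat{q}$ is controlled only in the Stepanov ($L^{p}$) sense and not pointwise, so turning uniform $L^{p}$-continuity of $u\mapsto\hat{q}(t,u)$ into a workable pointwise-in-$s$ bound on $\|q(t+s,x(t+s))\|$ demands careful attention to measurability of the composition $s\mapsto q(t+s,x(t+s))$ and to the interchange of supremum with integration; this is the genuine difference between the present Stepanov-type setting and the purely pointwise asymptotic case.

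Finally, $S^{p}$-boundedness of $f(\cdot,x(\cdot))$ follows from that of each summand (the first via almost periodicity, the second via the Lipschitz bound and $S^{q}$-boundedness of $y$ and $z$, the third via compactness of $\overline{K'}$ together with the $C_{0}$ property), so the three pieces combine to give the required asymptotic Stepanov decomposition and finish the proof.
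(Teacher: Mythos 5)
Your proposal is correct and follows essentially the same route as the paper: the identical three-term decomposition $f(t,x(t))=g(t,y(t))+[g(t,x(t))-g(t,y(t))]+q(t,x(t))$, with Theorem \ref{vcb-show} handling the first term, the H\"older inequality with exponents $r,q$ (exactly as in the paper) killing the Lipschitz bracket via $\hat{z}\in C_{0}$, and the compactness of $K'$ plus the $C_{0}$ hypothesis on $\hat{q}$ handling the last term. The only difference is that you spell out the finite $\delta$-net argument for $q(\cdot,x(\cdot))$, which the paper leaves implicit.
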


\begin{proof}
The proof is very similar to that of above-mentioned proposition; for the sake of completeness, we will include all details. Without loss of generality, we may assume that $X=Y.$ By Theorem \ref{vcb-show}, we have that the function $t\mapsto g(t, y(t)),$ $t\geq 0$ is Stepanov $p$-almost periodic. Since
\begin{align*}
f(t, x(t))=\bigl[g(t, x(t))-g(t, y(t))\bigr]+g(t, y(t))+q(t, x(t)),\quad t\geq 0,
\end{align*}
it suffices to show that
\begin{align}\label{por}
\lim_{t\rightarrow +\infty}\Biggl( \int^{t+1}_{t}\bigl\| g(s, x(s))-g(s, y(s))\bigr\|^{p} \, ds\Biggr)^{1/p}=0
\end{align}
and
\begin{align}\label{pol}
\lim_{t\rightarrow +\infty}\Biggl( \int^{t+1}_{t}\bigl\| q(s, x(s))\bigr\|^{p} \, ds\Biggr)^{1/p}=0.
\end{align}
For the estimate \eqref{por}, we can argue as in the proof of estimate \cite[(2.12)]{comp-adv}: by \eqref{vbnmp} and the H\"older inequality, we have that
\begin{align*}
\Biggl( \int^{t+1}_{t}\bigl\| g(s, x(s))&-g(s, y(s))\bigr\|^{p} \, ds\Biggr)^{1/p}
\\ & \leq \Biggl( \int^{t+1}_{t}L_{g}(s)^{p}\bigl\| x(s)-y(s)\bigr\|^{p} \, ds\Biggr)^{1/p}
\\ & \leq \Biggl( \int^{t+1}_{t}L_{g}(s)^{r}\, ds\Biggr)^{1/r} \Biggl( \int^{t+1}_{t}\bigl\| x(s)-y(s)\bigr\|^{q} \, ds\Biggr)^{1/q}
\\ & =\Biggl( \int^{t+1}_{t}L_{g}(s)^{r}\, ds\Biggr)^{1/r} \Biggl( \int^{t+1}_{t}\bigl\| z(s)\bigr\|^{q} \, ds\Biggr)^{1/q},\quad t\geq 0.
\end{align*}
Hence, \eqref{por} holds on account of $S^{r}$-boundedness of function $L_{g}(\cdot ) $ and assumption $\hat{z}\in C_{0}([0,\infty) : L^{q}([0,1]:X)).$ The proof of \eqref{pol} follows from assumption $\hat{q}\in C_{0}([0,\infty) \times X : L^{p}([0,1]:X))$
and relative compactness of set $ K' =\{x(t) : t \in I \setminus E'\}$
in $X.$
\end{proof}

Similarly, for the class of (equi-)Weyl $p$-almost periodic functions, we have the following result which is not comparabe with \cite[Theorem 3]{mellah} in the case of consideration of equi-Weyl $p$-almost periodic functions, with $I={\mathbb R}$ and $X=Y:$

\begin{thm}\label{vcb-prim} 
Suppose that the following conditions hold:
\begin{itemize}
\item[(i)] $f \in (e-)W_{ap,{\bf K}}^{p}(I \times Y:X)$ with  $p > 1, $ and there exist a number  $ r\geq \max (p, p/p -1)$ and a function $ L_{f}\in L_{S}^{r}(I) $ such that \eqref{vbnmp} holds.
\item[(ii)] $x \in (e-)W_{ap}^{p}(I : Y),$ and there exists a set $E \subseteq I$ with $m (E)= 0$ such that
$ K :=\{x(t) : t \in I \setminus E\}$
is relatively compact in $Y.$ 
\item[(iii)] For every $\epsilon>0,$ there exist two numbers $l>0$ and $L>0$ such that any interval $I'\subseteq I$ of length $L$ contains a number $\tau\in I'$ such that
\begin{align}\label{prc-besik}
\sup_{t\in I,u\in K}\Biggl[ \frac{1}{l}\int_{t}^{t+l}\bigl \| f(s+\tau,u) -f(s,u)\bigr\|^{p}\, ds\Biggr]^{1/p} \leq \epsilon 
\end{align}
and
\begin{align}\label{prcc-besik}
\sup_{t\in I}\Biggl[ \frac{1}{l}\int_{t}^{t+l}\bigl \| x(s+\tau) -x(s)\bigr\|^{p}_{Y}\, ds\Biggr]^{1/p} \leq \epsilon 
\end{align}
in the case of consideration of equi-Weyl $p$-almost periodic functions, resp., 
there exists a finite number $L>0$ such that any interval $I'\subseteq I$ of length $L$ contains a number $\tau\in I'$ satisfying that
there exists a number $l(\epsilon,\tau)>0$ so that \eqref{prc-besik}-\eqref{prcc-besik} hold for all numbers $l\geq l(\epsilon,\tau),$
in the case of consideration of Weyl $p$-almost periodic functions.
\end{itemize}
Then $q:=pr/p+r \in [1, p)$ and $f(\cdot, x(\cdot))\in (e-)W_{ap}^{q}(I : X).$
\end{thm}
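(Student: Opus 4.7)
My approach adapts the Long--Ding Stepanov argument (as used in Theorem \ref{vcb-show}) to the Weyl setting, leveraging Lemma \ref{bes} to descend from $L^p$- to $L^q$-averages at the right moments. First I would verify $q\in[1,p)$: the relation $1/q = 1/p + 1/r$ combined with $r\geq p/(p-1)$ yields $1/q\leq 1$, while $1/r>0$ gives $q<p$. Measurability and local $L^q$-integrability of $t\mapsto f(t,x(t))$ follow routinely from the Carath\'eodory structure of $f$ provided by \eqref{vbnmp}. The heart of the proof is the following decoupling: fix an arbitrary $\eta>0$, choose a finite $\eta$-net $\{u_1,\dots,u_N\}\subset\overline{K}$ (with $N=N(\eta)$), and then apply condition (iii) with the \emph{smaller} parameter $\epsilon := \eta/N^{1/q}$ to extract a common translation number $\tau$ satisfying \eqref{prc-besik} and \eqref{prcc-besik} in the appropriate $l$-regime. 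Writing the difference as $T_1(t)+T_2(t)$, where $T_1(t) := f(t+\tau,x(t+\tau)) - f(t+\tau,x(t))$ and $T_2(t) := f(t+\tau,x(t)) - f(t,x(t))$, I would then estimate the two pieces separately in the $L^q$-average sense on $[a,a+l]$, uniformly in $a\in I$.

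For $T_1$ I would combine \eqref{vbnmp} with the generalized H\"older inequality using exponents $r/q$ and $p/q$ (the identity $q/r+q/p=1$ is precisely the defining relation for $q$) to obtain
\[
\Bigl(\frac{1}{l}\int_a^{a+l}\|T_1(t)\|^q\,dt\Bigr)^{1/q} \leq \Bigl(\frac{1}{l}\int_a^{a+l} L_f(t+\tau)^r\,dt\Bigr)^{1/r}\Bigl(\frac{1}{l}\int_a^{a+l} \|x(t+\tau)-x(t)\|_Y^p\,dt\Bigr)^{1/p}.
\]
The second factor is $\leq\epsilon$ by \eqref{prcc-besik}, while the first is bounded by a multiple of $\|L_f\|_{S^r}$ (for $l\geq 1$, by partitioning $[a,a+l]$ into unit intervals). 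Thus the $L^q$-average of $\|T_1(\cdot)\|$ is at most $C\|L_f\|_{S^r}\epsilon\leq C\|L_f\|_{S^r}\eta$.

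For $T_2$ I would assign to each $t\in I\setminus E$ an index $k(t)$ with $\|x(t)-u_{k(t)}\|_Y<\eta$; the triangle inequality together with \eqref{vbnmp} gives
\[
\|T_2(t)\| \leq L_f(t+\tau)\eta + \|f(t+\tau,u_{k(t)})-f(t,u_{k(t)})\| + L_f(t)\eta.
\]
The two Lipschitz contributions are handled just as in the $T_1$ estimate, yielding $\leq 2C\|L_f\|_{S^r}\eta$ after Lemma \ref{bes} is invoked to pass from $L^q$- to $L^r$-averages of $L_f$ ($q\leq r$). For the middle term I would use the crude pointwise bound $\|f(t+\tau,u_{k(t)})-f(t,u_{k(t)})\|^q \leq \sum_{j=1}^{N}\|f(t+\tau,u_j)-f(t,u_j)\|^q$, then apply Lemma \ref{bes} (with $q<p$) and \eqref{prc-besik} to each fixed $u_j\in K$, obtaining an $L^q$-average $\leq N^{1/q}\epsilon=\eta$.

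Summing by Minkowski, the $L^q$-average of $\|f(t+\tau,x(t+\tau))-f(t,x(t))\|$ on $[a,a+l]$ is bounded by $(3C\|L_f\|_{S^r}+1)\eta$, uniformly in $a\in I$ and in $l$ in the appropriate regime. Taking $\sup_{a\in I}$ (and, in the Weyl case, letting $l\to\infty$) closes the argument. The main subtlety I anticipate is the $\epsilon$--$\eta$ decoupling: since the net cardinality $N(\eta)$ generally blows up as $\eta\to 0$, a naive use of condition (iii) with parameter $\eta$ would leave an uncontrollable term $N^{1/q}\eta$; the choice $\epsilon = \eta/N^{1/q}$ exactly cancels that factor and yields a final bound $\leq C'\eta$ with $C'$ independent of $\eta$, which is what makes the argument work for a general relatively compact $K\subseteq Y$.
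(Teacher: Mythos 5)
Your proposal is correct and follows the same overall strategy as the paper's proof: split $f(t+\tau,x(t+\tau))-f(t,x(t))$ into a Lipschitz piece and a translation piece, apply H\"older with the exponent relation $1/q=1/p+1/r$, reduce the translation piece to a finite net in the relatively compact range of $x(\cdot)$, and use Lemma \ref{bes} to pass between the $L^q$-, $L^p$- and $L^r$-averages. The one genuine point of divergence is how the net cardinality is absorbed. The paper fixes the net radius equal to the $\epsilon$ of condition (iii), so the sum over the $k=k(\epsilon)$ net points produces a term of order $k\epsilon\, l^{(1/q)-1}$, which is then killed by invoking the observation that \eqref{prc-besik}--\eqref{prcc-besik} persist under $l\mapsto nl$ and taking $l$ large (this requires $q>1$ for the exponent $(1/q)-1$ to be strictly negative, and the paper's displayed estimate silently drops the factor $k$). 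You instead decouple the two small parameters, choosing the net radius $\eta$ first and then feeding $\epsilon=\eta/N(\eta)^{1/q}$ into condition (iii); this yields a final bound $C'\eta$ with no residual dependence on the net size and works uniformly down to $q=1$, at the cost of nothing. Your version is the more robust bookkeeping. Two trivial points to tidy up: take the $\eta$-net inside $K$ itself rather than $\overline{K}$, since \eqref{prc-besik} is only asserted for $u\in K$; and when bounding $\bigl(\tfrac1l\int_a^{a+l}L_f(t+\tau)^r\,dt\bigr)^{1/r}$ by a multiple of $\|L_f\|_{S^r}$ you should record that this requires $l\geq 1$, which is harmless in both the equi-Weyl case (enlarge $l$ by the $nl$ observation) and the Weyl case (the estimate is only needed for large $l$).
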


\begin{proof}
Without loss of generality, we may assume that $X=Y.$
Since the function $ L_{f}(\cdot)$ is Stepanov $r$-bounded, equivalently, Weyl $r$-bounded,
the measurability and $S^{p}$-boundedness of function $f(\cdot, x(\cdot))$ follow similarly as in the proof of \cite[Theorem 2.2]{comp-adv}. Applying the H\"older inequality and an elementary calculation involving the estimate \eqref{vbnmp} and condition (ii), we get that, for every $t,\ \tau \in I$ and $l>0,$
\begin{align*}
\frac{1}{l}& \int^{t+l}_{t}\bigl\| f(s+\tau,x(s+\tau))-f(s,x(s))\bigr\|^{q}\, ds
\\ \leq &  \frac{1}{l}\Biggl[ \Biggl( \int^{t+l}_{t}L_{f}^{r}(s+\tau)\, ds\Biggr)^{1/r} \Biggl( \int^{t+l}_{t}\bigl\|x(s+\tau)-x(s)\bigr\|^{p}\, dt\Biggr)^{1/p}
\\  & +\Biggl( \int^{t+l}_{t}\bigl\| f(s+\tau,x(s))-f(s,x(s))\bigr\|^{q}\, ds\Biggr)^{1/q}\Biggr]
\\ \leq &  \frac{1}{l}\Biggl[ \Biggl( \int^{t+l}_{t}L_{f}^{r}(s+\tau)\, ds\Biggr)^{1/r} \Biggl( \int^{t+l}_{t}\bigl\|x(s+\tau)-x(s)\bigr\|^{p}\, dt\Biggr)^{1/p}
\\ & +\Biggl( \int^{t+l}_{t}\Bigl(\sup_{u\in K}\bigl\| f(s+\tau,u)-f(s,u)\bigr\|\Bigr)^{q}\, ds\Biggr)^{1/q}\Biggr].
\end{align*}
The remaining part of proof is almost the same for both classes of functions, equi-Weyl $p$-almost periodic functions and  Weyl $p$-almost periodic functions; because of that, we will consider only the first class up to the end of proof. Let $\epsilon>0$ be given.  By (iii), there exist two numbers $l>0$ and $L>0$ such that any interval $I'\subseteq I$ of length $L$ contains a number $\tau\in I'$ such that \eqref{prc-besik}-\eqref{prcc-besik} hold. Since the validity of \eqref{prc-besik}-\eqref{prcc-besik} with given numbers $l>0$ and $\tau\in I$ implies the validity of \eqref{prc-besik}-\eqref{prcc-besik} with numbers $nl$ and $\tau\in I$ ($n\in {\mathbb N}$), we may assume that the number $l>0$ is as large as we want to be. Then, due to Lemma \ref{bes}, we obtain the existence of a finite number $M>0$ such that:
$$ 
\frac{1}{l}\Biggl( \int^{t+l}_{t}L_{f}^{r}(s+\tau)\, ds\Biggr)^{1/r} \leq Ml^{(1/r)-1}\|L_{f}\|_{W^{r}},\quad t\in I
$$
and
\begin{align*}
&\frac{1}{l}\Biggl( \int^{t+l}_{t}L_{f}^{r}(s+\tau)\, ds\Biggr)^{1/r} \Biggl( \int^{t+l}_{t}\bigl\|x(s+\tau)-x(s)\bigr\|^{p}\, dt\Biggr)^{1/p}
\\ \leq & M l^{(1/p)+(1/r)-1}\|L_{f}\|_{W^{r}}=l^{(1/q)-1}\|L_{f}\|_{W^{r}}\leq \|L_{f}\|_{W^{r}},\quad t\in I.
\end{align*}
For the estimation of term
$$
\frac{1}{l}\Biggl( \int^{t+l}_{t}\Bigl(\sup_{u\in K}\bigl\| f(s+\tau,u)-f(s,u)\bigr\|\Bigr)^{q}\, ds\Biggr)^{1/q},\quad t\in I
$$
we can use the trick employed for proving \cite[Lemma 2.1]{comp-adv}.  
Since $K$ is totally bounded, there exist an integer $k\in {\mathbb N}$ and a finite subset $\{x_{1},\cdot \cdot \cdot,x_{k}\}$ of $K$ such that $K\subseteq \bigcup_{i=1}^{k}B(x_{i},\epsilon),$ where $B(x,\epsilon):=\{y\in X : \|x-y\|\leq \epsilon\}.$ Applying Minkowski's inequality and a simple argumentation similar to that used in the proof of above-mentioned lemma, we get the existence of a finite positive real number $c_{q}>0$ such that
\begin{align*}
\frac{1}{l}&\Biggl( \int^{t+l}_{t}\Bigl(\sup_{u\in K}\bigl\| f(s+\tau,u)-f(s,u)\bigr\|\Bigr)^{q}\, ds\Biggr)^{1/q}
\\ \leq & \frac{c_{q}}{l}\Biggl[ \epsilon \Biggl(\int^{t+l}_{t} \bigl[ L_{f}^{q}(s+\tau)+L_{f}^{q}(s)\bigr] \, ds\Biggr)^{1/q}+\sum_{i=1}^{k}\Biggl(\int^{t+l}_{t} \bigl\| f(s+\tau,x_{i})-f(s,x_{i})
\bigr\|^{q}\, ds \Biggr)^{1/q}\Biggr].
\end{align*}  
The term $\frac{1}{l}(\int^{t+l}_{t} [ L_{f}^{q}(s+\tau)+L_{f}^{q}(s)] \, ds)^{1/q}$ can be estimated by using Lemma \ref{bes} in the following way:
\begin{align*}
\frac{1}{l}\Biggl(\int^{t+l}_{t} &\bigl[ L_{f}^{q}(s+\tau)+L_{f}^{q}(s)\bigr] \, ds\Biggr)^{1/q}\leq \frac{1}{l} \Biggl(\int^{t+l+\tau}_{t+\tau} L_{f}^{q}(s)\, ds\Biggr)^{1/q}
+\frac{1}{l}\Biggl(\int^{t+l}_{t} L_{f}^{q}(s)\, ds\Biggr)^{1/q}
\\ \leq & Ml^{(-1/r)+(1/q)-1}\bigl\|L_{f}\bigr\|_{W^{r}}l^{1/r}\leq M\bigl\|L_{f}\bigr\|_{W^{r}},\quad t\in I.
\end{align*}
Similarly, using Lemma \ref{bes} and (iii), we get
\begin{align*}
\frac{1}{l}&\sum_{i=1}^{k}\Biggl(\int^{t+l}_{t} \bigl\| f(s+\tau,x_{i})-f(s,x_{i})
\bigr\|^{q}\, ds \Biggr)^{1/q}
\\ \leq & \frac{1}{l}l^{(1/q)-(1/p)}\sum_{i=1}^{k}\Biggl(\int^{t+l}_{t} \bigl\| f(s+\tau,x_{i})-f(s,x_{i})
\bigr\|^{p}\, ds \Biggr)^{1/p}\leq \epsilon l^{(1/q)-1},\quad t\in I.
\end{align*}
This completes the proof of theorem.
\end{proof}

\begin{rem}\label{obe-klase}
To the best knowledge of the author, it is not known whether the assumptions $f \in (e-)W_{ap}^{p}(I \times Y:X)$ and $x \in (e-)W_{ap}^{p}(I : Y)$
imply the validity of condition (iii), as for the class of Stepanov $p$-almost periodic functions. 
\end{rem}

The following result for the class of Weyl $p$-almost periodic functions can be also deduced with the help of argumentation contained in \cite{comp-adv} (compare with Theorem \ref{vcb-show}, where we have analyzed the Stepanov class):

\begin{thm}\label{vcb-show-weyll}
Suppose that $p,\ q\in [1,\infty) ,$ $r\in [1,\infty],$ $1/p=1/q+1/r$ and the following conditions hold:
\begin{itemize}
\item[(i)] $f \in W_{ap,{\bf K}}^{p}AP(I \times Y : X)  $ and there exists a function $ L_{f}\in L_{S}^{r}(I) $ such that \eqref{vbnmp} holds.
\item[(ii)] $x \in W_{ap}^{q}AP (I: Y),$ and there exists a set ${\mathrm E} \subseteq I$ with $m ({\mathrm E})= 0$ such that
$ K :=\{x(t) : t \in I \setminus {\mathrm E}\}$
is relatively compact in $Y.$ 
\item[(iii)] For every $\epsilon>0,$ there exists a finite number $L>0$ such that any interval $I'\subseteq I$ of length $L$ contains a number $\tau\in I'$ satisfying that
there exists a number $l(\epsilon,\tau)>0$ so that
\eqref{prc-besik} holds for all numbers $l\geq l(\epsilon,\tau)$ and \eqref{prcc-besik} holds for all numbers $l\geq l(\epsilon,\tau),$ with the number $p$ replaced by $q$ therein.
\end{itemize}
Then $f(\cdot, x(\cdot)) \in W_{ap}^{p}AP(I : X).$
\end{thm}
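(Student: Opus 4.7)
The plan is to mirror the strategy of Theorem~\ref{vcb-prim}, adjusting the exponents so that the output norm stays at~$p$ rather than dropping to a smaller value. Without loss of generality assume $X=Y$. Measurability and $S^{p}$-boundedness of $s\mapsto f(s,x(s))$ follow along the lines of \cite[Theorem~2.2]{comp-adv}, using the Lipschitz bound \eqref{vbnmp} together with $L_{f}\in L_{S}^{r}(I)\subseteq L_{S}^{p}(I)$ (by Lemma~\ref{bes}, since $r\geq p$). For the central estimate, fix $\tau\in I$ and $l>0$, and apply the triangle inequality in $L^{p}([t,t+l]:X)$ to split
\begin{align*}
\Bigl(\int_{t}^{t+l}\|f(s+\tau,x(s+\tau))-f(s,x(s))\|^{p}\,ds\Bigr)^{1/p}\leq A(t,l)+B(t,l),
\end{align*}
where $A$ isolates the $x$-difference via \eqref{vbnmp} and $B=(\int_{t}^{t+l}\|f(s+\tau,x(s))-f(s,x(s))\|^{p}\,ds)^{1/p}$ isolates the shift in the first variable.

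For $A$, I apply H\"older with exponents $r/p$ and $q/p$ (admissible because $p/r+p/q=1$):
\begin{align*}
A\leq\Bigl(\int_{t}^{t+l}L_{f}^{r}(s+\tau)\,ds\Bigr)^{1/r}\Bigl(\int_{t}^{t+l}\|x(s+\tau)-x(s)\|^{q}\,ds\Bigr)^{1/q}.
\end{align*}
Dividing by $l^{1/p}=l^{1/r+1/q}$, the first factor renormalized by $l^{1/r}$ is uniformly bounded by (roughly) $\|L_{f}\|_{W^{r}}<\infty$ for $l$ large, while the second factor renormalized by $l^{1/q}$ is controlled by condition~(iii) (the shift bound for $x$ in exponent~$q$) once $l\geq l(\epsilon,\tau)$.

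For~$B$, I use the Long--Ding covering trick from \cite[Lemma~2.1]{comp-adv}. Pick a radius $\delta_{1}>0$; by total-boundedness of $K$, there exist $k=k(\delta_{1})$ points $x_{1},\dots,x_{k}\in K$ with $K\subseteq\bigcup_{i=1}^{k}B(x_{i},\delta_{1})$. For each~$s$, choose $i(s)$ with $\|x(s)-x_{i(s)}\|\leq\delta_{1}$; two applications of \eqref{vbnmp} give $\|f(s+\tau,x(s))-f(s,x(s))\|\leq\delta_{1}(L_{f}(s+\tau)+L_{f}(s))+\|f(s+\tau,x_{i(s)})-f(s,x_{i(s)})\|$. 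Taking the $L^{p}$-norm on $[t,t+l]$, applying Minkowski, splitting the last term across the pieces of the cover, and dividing by $l^{1/p}$, I arrive (using Lemma~\ref{bes} to pass from $L_{f}^{p}$ to $L_{f}^{r}$) at
\begin{align*}
l^{-1/p}B(t,l)\leq 2\delta_{1}\|L_{f}\|_{W^{r}}+\sum_{i=1}^{k}\Bigl[\frac{1}{l}\int_{t}^{t+l}\|f(s+\tau,x_{i})-f(s,x_{i})\|^{p}\,ds\Bigr]^{1/p}+o(1),
\end{align*}
and condition~(iii) (applied with parameter $\delta_{2}$) makes each summand at most $\delta_{2}$ for $l\geq l(\delta_{2},\tau)$.

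The main obstacle is the circular dependence between the cover size $k(\delta_{1})$ and the shift tolerance $\delta_{2}$: a naive choice $\delta_{1}=\delta_{2}=\epsilon$ produces a final bound of order $k(\epsilon)\cdot\epsilon$, which need not vanish as $\epsilon\to 0$ because $k(\epsilon)$ can grow arbitrarily in a general Banach space. I resolve this by a two-step parameter tuning. Given a target $\delta>0$, first fix $\delta_{1}:=\delta/(8\|L_{f}\|_{W^{r}})$ so that $2\delta_{1}\|L_{f}\|_{W^{r}}\leq\delta/4$; next compute $k=k(\delta_{1})$ from the cover; finally set $\delta_{2}:=\delta/(8(\|L_{f}\|_{W^{r}}+k))$ and apply condition~(iii) with $\epsilon=\delta_{2}$ to obtain a common length $L$ and, in each interval of that length, a translation $\tau$ and a threshold $l(\delta_{2},\tau)$ for which the Hölder bound in $A$ and each of the $k$ summand bounds in $B$ hold simultaneously. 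Combining gives $l^{-1/p}(A+B)\leq\delta$ for all sufficiently large~$l$, which is precisely the Weyl $p$-almost periodicity condition for $f(\cdot,x(\cdot))$.
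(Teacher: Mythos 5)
Your argument is correct and follows exactly the route the paper intends: the paper states Theorem \ref{vcb-show-weyll} without a written proof, remarking only that it ``can be deduced with the help of argumentation contained in'' Long--Ding \cite{comp-adv}, and your splitting into the Lipschitz/H\"older term $A$ (with exponents $r/p$, $q/p$, matching $1/p=1/q+1/r$) plus the covering term $B$, together with the order-of-quantifiers tuning $\delta_1 \to k(\delta_1) \to \delta_2$, is precisely that argumentation transplanted to the Weyl setting as in the paper's proof of Theorem \ref{vcb-prim}. In effect you have supplied the details the paper omits, and no step fails.
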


After proving Theorem \ref{vcb-prim}, the subsequent composition principle for asymptotically (equi-)Weyl $p$-almost periodic functions follows almost immediately; cf. also \cite[Theorem 3.4]{irkutsk-prim} for a similar result in this direction.

\begin{thm}\label{rora} 
Suppose that $p > 1, $ $ r\geq \max (p, p/p -1),$ $q=pr/p+r,$ and the conditions \emph{(i)-(iii)} of \emph{Theorem \ref{vcb-prim}} hold with the interval $I=[0,\infty)$ and the functions $f(\cdot,\cdot),$  $x(\cdot)$ replaced therein with the functions $g(\cdot,\cdot),$ $y(\cdot).$ Suppose, further, that the following holds:
\begin{itemize}
\item[(i)] The function $Q:=f-g : [0,\infty ) \times Y \rightarrow X$ is in class $(e-)W^{q'}_{0,{\bf K}}([0,\infty )\times Y:X) $ for some number $q'\in [1,\infty).$
\item[(ii)] The function $z : [0,\infty ) \rightarrow Y$ is in class $(e-)W^{q''}_{0}([0,\infty ):Y) $ for some number $q''\in [1,\infty).$
\item[(iii)] $x(t)=y(t)+z(t)$ for a.e. $t\geq 0,$ and there exists a set ${\mathrm E} \subseteq I$ with $m ({\mathrm E})= 0$ such that
$ K :=\{x(t) : t \in I \setminus {\mathrm E}\}$
is relatively compact in $Y.$
\end{itemize}
Then the mapping $t\mapsto f(t,x(t)),$ $t\geq 0$ is in class $$(e-)W^{q}_{ap}([0,\infty): X) +(e-)W^{q'}_{0}([0,\infty ):X)+(e-)W^{q'''}_{0}([0,\infty ):X) ,$$
provided $q'''\in [1,\infty)$ and $1/r+1/q''=1/q'''.$
\end{thm}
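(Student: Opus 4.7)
The plan is to decompose $f(t,x(t))$ into a periodic part, a Lipschitz error, and a vanishing-in-$u$ remainder. Writing
$$f(t,x(t))=g(t,y(t))+\bigl[g(t,x(t))-g(t,y(t))\bigr]+Q(t,x(t)),$$
I aim to place the three summands in $(e-)W^{q}_{ap}$, $(e-)W^{q'''}_{0}$ and $(e-)W^{q'}_{0}$ respectively, and sum them to conclude.

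The first summand $g(\cdot,y(\cdot))\in (e-)W^{q}_{ap}([0,\infty):X)$ is delivered immediately by Theorem \ref{vcb-prim} applied to the pair $(g,y)$, since by hypothesis $g$ and $y$ fulfil conditions (i)--(iii) of that theorem with exponents $p$, $r$, and $q=pr/(p+r)$.

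For the second summand, I would start from the Lipschitz estimate $\|g(s,x(s))-g(s,y(s))\|\leq L_{g}(s)\|z(s)\|_{Y}$ coming from \eqref{vbnmp}, then apply H\"older's inequality on each window $[\xi,\xi+l]$ shifted by $t$ with conjugate exponents $r/q'''$ and $q''/q'''$ (they are conjugate precisely because $1/r+1/q''=1/q'''$). A short computation, of the same power-of-$l$ cancellation type that appears in the proof of Theorem \ref{vcb-prim}, yields
$$\Biggl[\frac{1}{l}\int_{\xi}^{\xi+l}\bigl\|g(t+s,x(t+s))-g(t+s,y(t+s))\bigr\|^{q'''}\,ds\Biggr]^{1/q'''}\leq A_{t,\xi,l}\cdot B_{t,\xi,l},$$
where $A_{t,\xi,l}=\bigl[\frac{1}{l}\int_{\xi}^{\xi+l}L_{g}(t+s)^{r}\,ds\bigr]^{1/r}$ is uniformly bounded thanks to the Stepanov (equivalently, Weyl) $r$-boundedness of $L_{g}$, and $B_{t,\xi,l}=\bigl[\frac{1}{l}\int_{\xi}^{\xi+l}\|z(t+s)\|_{Y}^{q''}\,ds\bigr]^{1/q''}$ vanishes by assumption (ii) in the $(e-)W^{q''}_{0}$ sense. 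Taking the supremum over $\xi\geq 0$ and the appropriate iterated limit in $l$ and $t$ (outer $l\to\infty$ first for the equi-Weyl case, reversed for the Weyl case) places the second summand in $(e-)W^{q'''}_{0}([0,\infty):X)$.

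The third summand $Q(t,x(t))$ is the delicate piece. Using that the range of $x$ is a.e.\ contained in the relatively compact set $K$ of assumption (iii), I would exploit $Q\in (e-)W^{q'}_{0,{\bf K}}([0,\infty)\times Y:X)$ through a totally-bounded-net argument in the spirit of Theorem \ref{vcb-prim}: cover $K$ by finitely many balls $B(x_{i},\delta)$ and, for each $s$, let $i(s)$ be the index with $x(t+s)\in B(x_{i(s)},\delta)$. The splitting $Q(t+s,x(t+s))=Q(t+s,x_{i(s)})+[Q(t+s,x(t+s))-Q(t+s,x_{i(s)})]$ allows the principal terms $Q(t+s,x_{i})$ to be controlled via Minkowski's inequality together with the definition of $(e-)W^{q'}_{0,{\bf K}}$ evaluated at the finitely many net points, which yields the desired decay as $t,l\to\infty$. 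The main obstacle lies in the approximation error $Q(t+s,x(t+s))-Q(t+s,x_{i(s)})$: the hypotheses provide a modulus-of-continuity bound only for $g$ (through $L_{g}$), so rewriting $Q=f-g$ and bounding the residual $f$-difference from ambient Stepanov/Weyl boundedness on the net is the technically subtle step. Once this is carried out, summing the three pieces finishes the proof.
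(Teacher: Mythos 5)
Your decomposition $f(t,x(t))=[g(t,x(t))-g(t,y(t))]+g(t,y(t))+Q(t,x(t))$ is exactly the one the paper uses, and your handling of the first two summands matches the paper's proof. The membership $g(\cdot,y(\cdot))\in (e-)W^{q}_{ap}([0,\infty):X)$ is quoted from Theorem \ref{vcb-prim} in both cases; for the Lipschitz error term the paper merely defers to \cite[Theorem 3.4]{irkutsk-prim} with the exponents $p,\ q,\ r$ replaced by $q''',\ q'',\ r$, and your H\"older computation with conjugate exponents $r/q'''$ and $q''/q'''$ (the powers of $l$ indeed cancel since $1/r+1/q''=1/q'''$) is precisely that deferred argument written out. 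Up to this point the proposal is correct and essentially identical to the paper.

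The gap is in the third summand, and it is a gap you diagnose but do not close. The paper disposes of $Q(\cdot,x(\cdot))$ in one line: since $x(t)\in K$ a.e.\ and $Q\in (e-)W^{q'}_{0,{\bf K}}([0,\infty)\times Y:X)$, the composed function is declared to lie in $(e-)W^{q'}_{0}([0,\infty):X)$ ``by definition.'' Your instinct that this is not purely formal is sound: in Definition \ref{stea} the supremum over $u\in K$ sits \emph{outside} the averaged integral, so the pointwise bound $\|Q(t+s,x(t+s))\|\leq \sup_{u\in K}\|Q(t+s,u)\|$ controls the composition by the average of a supremum, which is in general larger than the supremum of averages that the definition makes small. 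Your proposed repair --- a finite $\delta$-net of $K$ together with the splitting $Q(t+s,x(t+s))=Q(t+s,x_{i(s)})+[Q(t+s,x(t+s))-Q(t+s,x_{i(s)})]$ --- founders exactly where you say it does: the hypotheses supply a Lipschitz majorant only for $g$, not for $f$, hence not for $Q=f-g$, so the approximation error has no modulus against which to be measured. As submitted, your proof therefore terminates with an unproved step (``once this is carried out\ldots''), whereas the paper asserts the same step as definitional. To complete the argument one must either add a hypothesis (uniform continuity of $Q(t,\cdot)$ on $K$, uniformly in $t$, or a Lipschitz condition on $f$ itself, which would make your net argument go through), or else read condition (i) with the supremum over $u\in K$ taken inside the time average, in which case the paper's one-line conclusion becomes literally correct. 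You should state explicitly which of these resolutions you adopt rather than leaving the step open.
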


\begin{proof}
It is clear that 
\begin{align*}
f(t, x(t))=\bigl[g(t, x(t))-g(t, y(t))\bigr]+g(t, y(t))+Q(t, x(t)),\quad t\geq 0.
\end{align*}
By Theorem \ref{vcb-prim}, we know that $g(\cdot, y(\cdot))\in (e-)W_{ap}^{q}([0,\infty) : X).$ Keeping in mind (i) and (iii), 
the function $t\mapsto Q(t,x(t)),$ $t\geq 0$ is in class $(e-)W^{q'}_{0}([0,\infty ):X)$ by definition (see the notions of classes $W^{p}_{0,{\bf K}}(I \times Y:X)$ and $e-W^{p}_{0,{\bf K}}(I \times Y:X)$ introduced in Definition \ref{stea}).
Therefore,
it suffices to show that the mapping $t\mapsto g(t, x(t))-g(t, y(t)),$ $t\geq 0$ is in class $(e-)W^{q'''}_{0}([0,\infty ):X) .$ But, this follows similarly as in the proof of  \cite[Theorem 3.4]{irkutsk-prim}, with the exponents $p,\ q,\ r$ replaced therein with the exponents $q''',\ q'',\ r,$ respectively. 
\end{proof}

An analogue of \cite[Theorem 3.4]{irkutsk-prim} for the class of asymptotically Weyl $p$-almost periodic functions can be also deduced by means of Theorem \ref{vcb-show-weyll}. 

Before we move ourselves to the next subsection, we 
would like to mention that S. Abbas has recently proved a composition principle for the class of Weyl
pseudo almost automorphic functions in \cite{sead-abas}.

\subsection[Applications to semilinear Cauchy inclusions]{Applications to semilinear Cauchy inclusions}\label{semilinear}

The composition principles for (asymptotically) Stepanov $p$-almost periodic functions can be almost straightforwardly incorporated in the study of existence and uniqueness of (asymptotically) almost periodic solutions for a great number of different types of abstract semilinear Cauchy inclusions in Banach spaces with Stepanov forcing coefficients. For example, Theorem 
\ref{vcb-show} is applicable in the study of qualitative properties of solutions of the abstract semilinear Volterra integral inclusion 
\begin{align*}
u(t)\in {\mathcal A}\int^{t}_{-\infty}a(t-s)u(s)\, ds +f(t,u(t)),\ t\in {\mathbb R},
\end{align*}
where $a \in L_{loc}^{1}([0,\infty)),$ $a\neq 0,$ $f : {\mathbb R} \rightarrow X$ is Stepanov $p$-almost periodic, ${\mathcal A}$ is a closed multivalued linear operator on $X$
and certain extra assumptions are satisfied (see e.g. the articles \cite{cuevas-lizama-duo} by C. Cuevas, C. Lizama and \cite{hernan-lizama} by H. R. Henr\' iquez, C. Lizama as well as \cite{nova-mono} for the notion and basic results about multivalued linear operators). We can also analyze almost periodic solutions of the following abstract Cauchy fractional relaxation inclusion
\begin{align*}
D_{t,+}^{\gamma}u(t)\in -{\mathcal A}u(t)+f(t,u(t)),\ t\in {\mathbb R},
\end{align*}
where $D_{t,+}^{\gamma}$ denotes the Weyl Liouville fractional derivative of order $\gamma \in (0,1),$ 
$f : {\mathbb R} \rightarrow X$ is Stepanov $p$-almost periodic and ${\mathcal A}$ is a closed multivalued linear operator on $X.$ For example, the assertions of \cite[Theorem 4.3, Theorem 4.5]{jms} can be reformulated in our framework.

Combining Theorem 
\ref{vcb-show} and Proposition \ref{biblonja}, we can examine the existence and uniqueness of asymptotically almost periodic solutions of fractional relaxation inclusions with
Caputo derivatives. More precisely, 
let $\gamma \in (0,1)$ and let ${\mathcal A}$ be a closed multivalued linear operator on $X.$ Of concern is the abstract semilinear Cauchy inclusion
\[
\hbox{(DFP)}_{f,\gamma,s} : \left\{
\begin{array}{l}
{\mathbf D}_{t}^{\gamma}u(t)\in {\mathcal A}u(t)+f(t,u(t)),\ t> 0,\\
\quad u(0)=x_{0},
\end{array}
\right.
\]
where ${\mathbf D}_{t}^{\gamma}$ denotes the Caputo fractional derivative of order $\gamma,$ $x_{0}\in X$ and 
$f : [0,\infty) \times X \rightarrow X$ is Stepanov $p$-almost periodic. Using the above-mentioned results, we can simply reformulate the assertions of \cite[Theorem 2.3, Corollary 2.1, Theorem 2.6, Corollary 2.3]{publ} in our framework.

As in a great number of our recent research studies, the results obtained in the first part of this section 
can be successfully applied in the analysis of degenerate Poisson heat equation and its fractional analogues (cf. \cite{nova-mono} for further information in this direction). Let $\Omega$ be a bounded domain in ${\mathbb R}^{n},$ $b>0,$ $m(x)\geq 0$ a.e. $x\in \Omega$, $m\in L^{\infty}(\Omega),$ $1<p<\infty$ and $X:=L^{p}(\Omega).$
We can apply Theorem 
\ref{vcb-show} for proving certain results about the existence and uniqueness of almost periodic solutions of semilinear Poisson heat equation
\[
D^{\gamma}_{t,+}[m(x)v(t,x)]=(\Delta -b )v(t,x) +f(t,m(x)v(t,x)),\quad t\in {\mathbb R},\ x\in {\Omega}
\]
with Weyl Liouville fractional derivatives. Taken together, Theorem 
\ref{vcb-show} and Proposition \ref{biblonja} are applicable in the study of existence and uniqueness of asymptotically almost periodic solutions of the fractional semilinear Poisson heat equation
\[\left\{
\begin{array}{l}
{\mathbf D}_{t}^{\gamma}[m(x)v(t,x)]=(\Delta -b )v(t,x) +f(t,m(x)v(t,x)),\quad t\geq 0,\ x\in {\Omega},\\
v(t,x)=0,\quad (t,x)\in [0,\infty) \times \partial \Omega ,\\
 m(x)v(0,x)=u_{0}(x),\quad x\in {\Omega}
\end{array}
\right.
\]
with Caputo fractional derivatives.

The spaces of (equi-)Weyl $p$-almost periodic functions are not complete with the respect to the Weyl norm, as it is well known. Without any doubt, this seriously hinders possibilities to apply 
Theorem \ref{vcb-prim}, Theorem \ref{vcb-show-weyll} and Theorem \ref{rora} in the analysis of existence and uniqueness of (asymptotically) almost Weyl $p$-almost periodic solutions 
of abstract semilinear Cauchy inclusions. The existence and uniqueness of bounded, continuous, equi-Weyl $p$-almost periodic solutions of the abstract semilinear
differential equations of first order has been initiated in \cite[Section 3.3]{mellah} (cf. also \cite[Theorem 3.5, Theorem 3.6]{irkutsk-prim}). Because of its non-trivial character, we will continue this theme somewhere else.

The paper is dedicated to the memory of old Professor Bogoljub Stankovi\' c. For his contributions in the field of generalized almost periodic functions, we would like to recommend for the reader the articles \cite{stankovic}-\cite{stankovic1}.

\bibliographystyle{amsplain}

\end{document}